\newtheorem{lemma}{Lemma}
\newtheorem{Atheorem}{Theorem}[section]
\newenvironment{sproof}[1]
{\begin{proof}[#1]} {\end{proof}}
\newcommand{\Z}{\mathbb Z}
\newcommand{\R}{\mathbb R}
\newcommand{\Q}{\mathbb Q}
\newcommand{\GL}{\textnormal{GL}}
\newcommand{\SL}{\textnormal{SL}}
\newcommand{\D}{\textnormal{D}}
\newcommand{\rk}{\textnormal{rk}}
\newcommand{\GE}{\textnormal{GE}}
\newcommand{\U}{\textnormal{Um}}
\newcommand{\E}{\textnormal{E}}
\newcommand{\pr}{\operatorname{pr}}
\newcommand{\br}[1]{\lbrack #1 \rbrack}
\newcommand{\rb}{\mathbf{r}}
\newcommand{\mD}{\mathcal{D}}
\newcommand{\mO}{\mathcal{O}}
\newcommand{\mOD}{\mathcal{O}(\mathcal{D})}
\newcommand{\mODe}{\mathcal{O}(\mathcal{D} \setminus \{e\})}
\newcommand{\PZd}{\prod_{d \in \mD} \Z \lbrack \zeta_d \rbrack}
\newcommand{\ZX}{ \Z \lbrack  X \rbrack}
\newcommand{\Zd}{ \Z \lbrack \zeta_d \rbrack}
\newcommand{\Ze}{ \Z \lbrack \zeta_e \rbrack}
\newcommand{\ZC}{ \mathbb{Z} \lbrack C \rbrack}
\newcommand{\cC}{\vert C \vert}
\title{On quotients of generalized Euclidean group rings}
\address{EPFL ENT CBS BBP/HBP. Campus Biotech. B1 Building, Chemin des mines, 9\\Geneva 1202, Switzerland}
\email{luc.guyot@epfl.ch}
\author{Luc Guyot}
\date{\today}
\keywords{group ring; quotient; generalized Euclidean ring; quasi-Euclidean ring; elementary matrix; unimodular row; Nielsen equivalence; metabelian group}
\subjclass[2010]{Primary 13F07, Secondary 16S34}
\begin{document}
\maketitle
\begin{abstract}
Let $R = \ZC$ be the integral group ring of a finite cyclic group $C$. 
Dennis et al. proved that $R$ is a generalized Euclidean ring in the sense of P. M. Cohn, i.e., $\SL_n(R)$ is generated by the elementary matrices for all $n$.
We prove that every proper quotient of $R$ is also a generalized Euclidean ring. 
\end{abstract}

\section{Introduction} \label{SecIntro}
Let $R$ be an associative ring with identity. For $n \ge 2$, we define $\E_n(R)$ as the subgroup of $\GL_n(R)$ generated by the elementary matrices, i.e., the matrices which differ from the identity by a single off-diagonal element. The ring $R$ is termed a $\GE_n$-ring if 
$\GL_n(R) = \D_n(R)\E_n(R)$, where $\D_n(R)$ denotes the subgroup of invertible diagonal matrices. Following P. M. Cohn, we term $R$ a \emph{generalized Euclidean ring}, or concisely a $\GE$-ring , if $R$ is a $\GE_n$-ring for every $n \ge 2$, a property notoriously shared by Euclidean rings. 
If $R$ is commutative, Whitehead's lemma implies that $\D_n(R) \cap \SL_n(R) \subset \E_n(R)$. Therefore the property $\GE_n$ is equivalent to $\SL_n(R) = \E_n(R)$ in this case. Let $I$ be an ideal of a commutative ring $R$ and let $\varphi_n$ be the natural map 
$\SL_n(R) \rightarrow \SL_n(R/I)$. The natural map $\E_n(R) \rightarrow \E_n(R/I)$ is surjective and if $\varphi_n$ is surjective too, then $R/I$ is certainly a $\GE_n$-ring whenever $R$ is. 
But $\varphi_n$ can fail to be surjective for all $n \ge 2$ as is the case for $R = \R\br{X, Y}$ and $I$ the principal ideal generated by $X^2 + Y^2 -1$ (see   \cite[Example 13.5]{Mil71} and \cite[Proposition I.8.12 and Theorem VI.4.5]{Lam06}). Thus the inheritance of property $\GE_n$ by homomorphic images is not automatic.
In the previous example $R$ is a $\GE_3$-ring whereas $R/I$ is not a $\GE_n$-ring for any $n \ge 2$.
We can actually produce a non-commutative $\GE$-ring having a non-$\GE_2$ quotient. 
Indeed, a non-commutative free associative algebra over a field $k$
 is a $\GE$-ring and maps homomorphically onto a free commutative algebra over $k$ which is not a $\GE_2$-ring by \cite[Theorem 3.4 and Proposition 7.3]{Coh66}.

By contrast, the class of commutative $\GE$-rings is known to be stable under taking quotients by the ideals contained in the Jacobson radical \cite[Proposition 5]{Gel77}.
In addition, the class of quasi-Euclidean rings, i.e., the $\GE$-rings which are moreover Hermite rings in the sense of Kaplansky \cite{Kap49}, is stable under taking quotients by any ideals \cite{AJLL14}. 

The integral group ring $\ZC$ of a finite cyclic group $C$ was shown to be a $\GE$-ring in \cite{DMV84}. If $C$ is not trivial, then $\ZC$ has a non-principal ideal, and hence is not quasi-Euclidean. The purpose of this article is to establish
\begin{Atheorem} \label{ThZCGE}
Let $C$ be a finite cyclic group. Then every homomorphic image of $\Z \br{C}$ is a $\GE$-ring.
\end{Atheorem}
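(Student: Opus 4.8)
Write $R=\ZC\cong\ZX/(X^{\cC}-1)$ and use the factorization $X^{\cC}-1=\prod_{d\in\mD}\Phi_d$, indexed by the set $\mD$ of divisors of $\cC$, to embed $R$ with finite cokernel into $\PZd$; for $d\in\mD$ let $P_d=\Ker(R\to\Zd)$, a minimal prime with $R/P_d\cong\Zd$. Fix an ideal $I\trianglelefteq R$ and set $A=R/I$; the task is to show $A$ is a $\GE$-ring. Since $A$ is a finitely generated $\Z$-algebra of Krull dimension at most $1$, Bass' theorem gives stable rank $\le 2$, so by Vaserstein's stability the property $\GE_n$ for $n\ge 3$ amounts to the vanishing of $SK_1(A)$, which I expect to obtain by a Mayer--Vietoris descent to the rings $\Zd$ (where $SK_1$ vanishes by Bass--Milnor--Serre) and to finite rings. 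The whole difficulty is thus concentrated in the unstable property $\GE_2$, i.e.\ $\SL_2(A)=\E_2(A)$, and this is what the plan below addresses.

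Two easy cases come first. If $A$ is finite it is semilocal, hence of stable rank $1$, and a shortening argument gives $\SL_2(A)=\E_2(A)$; so all zero--dimensional quotients are $\GE$-rings. In general I would first reduce to $A$ reduced: the nilradical $\mathfrak n$ is nilpotent and lies in the Jacobson radical, units and elementary generators lift modulo $\mathfrak n$, and $\SL_2(A,\mathfrak n)=\E_2(A,\mathfrak n)$ for a nilpotent ideal, so $A$ is $\GE_2$ once $A/\mathfrak n$ is. Assuming $A$ reduced, the minimal primes over $I$ are the $P_d$ for $d$ in a subset $\mD_I\subseteq\mD$ together with finitely many isolated maximal ideals; the one--dimensional part $V(\bigcap_{d\in\mD_I}P_d)$ and the isolated points are disjoint closed subsets of $\operatorname{Spec}A$, so by the Chinese remainder theorem $A\cong B\times A'$ with $A'$ finite and $B=\operatorname{im}(R\to\prod_{d\in\mD_I}\Zd)$. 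As a finite product of $\GE$-rings is a $\GE$-ring, everything comes down to the ``gluings'' $B$.

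The core is an induction on $\vert\mD_I\vert$. Writing $\mD_I=\mD'\sqcup\{e\}$, the projections exhibit $B$ as a fibre product
\[
B=B'\times_F\Ze,\qquad B'=\operatorname{im}\!\Big(R\to\prod_{d\in\mD'}\Zd\Big),
\]
in a conductor (Milnor) square whose two maps onto $F$ are surjective and whose corner $F$ is finite, since $R$ has finite conductor in $\PZd$. The base case $B=\Ze$ is a $\GE_2$-ring: for $\varphi(e)\le 2$ the ring $\Ze$ is norm--Euclidean, and for $\varphi(e)\ge 4$ it has unit rank $\ge 1$, whence $\SL_2(\Ze)=\E_2(\Ze)$ by Vaserstein's theorem on $\SL_2$ of rings of integers with infinitely many units. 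The inductive step is a patching lemma: if $B'$ and $\Ze$ are $\GE_2$-rings and $F$ is finite, then $B=B'\times_F\Ze$ is a $\GE_2$-ring.

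This patching lemma is the main obstacle, and it is where naive descent breaks down: $\SL_2$ does not satisfy Mayer--Vietoris in general (the failure is already visible in the breakdown of the congruence subgroup property for $\SL_2(\Z)$), so one cannot simply combine elementary reductions over $B'$ and over $\Ze$ into an elementary reduction over $B$. The feature that rescues the cyclotomic setting is unit surjectivity onto the corner: the cyclotomic units $(1-\zeta_e^{a})/(1-\zeta_e)$ reduce to $a\bmod\mathfrak p$ on each residue field of $F$, so $\Ze^{\times}\to F^{\times}$ is onto. I would use this to run a Euclidean--type reduction of a unimodular row $(a,b)\in\U_2(B)$: reduce its two components over $B'$ and over $\Ze$ separately, then use a unit of $\Ze$ with prescribed image in $F^{\times}$ to annihilate, by an elementary move over $B$, the discrepancy between the two reductions over $F$. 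Making this correction genuinely elementary over $B$, rather than merely an element of $\SL_2(B)$, is the delicate point; it is the same mechanism underlying the proof of Dennis et al.\ that $\ZC$ itself is a $\GE$-ring, now applied to the intermediate gluings $B$ attached to proper subsets $\mD_I\subsetneq\mD$ (the case $\mD_I=\mD$ recovering their result).
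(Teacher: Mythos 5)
Your global architecture is the same as the paper's: reduce everything to the unstable property $\GE_2$ using stable rank $\le 2$ (your $SK_1$/Mayer--Vietoris remark for $n \ge 3$ is an unnecessary detour --- once $\SL_2(A)=\E_2(A)$, surjective stability already gives $\SL_n(A)=\E_n(A)$ for all $n$, which is the paper's Lemma \ref{LemGECriteria}$(ii)$); dispose of finite quotients as semilocal; pass to the reduced case through the Jacobson radical; split off the finite semisimple part (your CRT splitting $A \cong B \times A'$ is a correct variant of the paper's move, which instead kills the finite ideal $R \cap (\{0\}\times K)$ via Lemma \ref{LemStabilityOfGE}$(ii)$); reduce to the gluings $B = \mO(\mD_I)$; and induct along conductor squares with base case $\Zd$, Euclidean for $\varphi(d)\le 2$ and Vaserstein's theorem \cite{Vas72} for $\varphi(d)\ge 4$ --- all exactly as in the paper.

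The genuine gap is your patching lemma, which is where the entire content of the proof (and of Dennis--Magurn--Vaserstein) lives, and which you state in a generality that is not available. Without the unit hypothesis it is false: a uniform statement ``$B'$ and $\Ze$ $\GE_2$, $F$ finite $\Rightarrow$ $B' \times_F \Ze$ $\GE_2$'' would make every ring $\Z \times_{\Z/n\Z} \Z$ a $\GE_2$-ring, i.e.\ would force $\SL_2(\Z,n)$ to equal the normal closure in $\SL_2(\Z)$ of the level-$n$ elementary matrices, which is well known to fail for large $n$ (the same failure of Mayer--Vietoris for $\SL_2$ that you yourself flag). And your rescue mechanism --- surjectivity of $\Ze^{\times} \to F^{\times}$ via cyclotomic units --- breaks down precisely in cases that occur in the induction: for $e \in \{1,2\}$ one has $\Ze = \Z$, $\Z^{\times} = \{\pm 1\}$, and no cyclotomic units exist; concretely, at $\mD = \{1,2,3,4,6\}$ with $e = 1$ the corner is $F = \Z/12\Z$ and $\{\pm 1\}$ hits only a proper subgroup of $F^{\times}$ (your computation $(1-\zeta_e^{a})/(1-\zeta_e) \mapsto a$ also presupposes $\zeta_e \equiv 1$ on the residue fields of $F$, which holds only above ramified primes). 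This is exactly why the paper has no uniform patching lemma and instead runs the induction so as to stay inside the five arithmetically constrained cases of Lemma \ref{LemBottomUp}: peel off the \emph{largest} $e \notin \{1,2,3,4,6\}$ first, so that $d/e$ is never a positive prime power and the infinite-unit machinery of \cite[Lemma 3.5]{DMV84} applies; for $e \in \{1,2,3,4,6\}$ require $\eta_e = \prod_{d \in \mD\setminus\{e\}}\Phi_d(\zeta_e)$ to be a unit, $\pm 3$, $2$, or $(1-\zeta_e)^{k}$ with $k \le 2$, which feeds the Euclidean-pair reductions of \cite[Lemmas 4.1 and 4.2]{DMV84}; and treat the two residual sets $\{1,2,3,6\}$ and $\{1,2,3,4,6\}$, where none of these conditions holds for any choice of $e$, by the ad hoc \cite[Propositions 5.1 and 5.10]{DMV84}. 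Your plan would have to reconstruct precisely this case analysis; as written, it is missing at its crux.
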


Our result has some bearing in combinatorial group theory. Let $R$ be quotient of $\ZC$ and let $M$ be a finitely generated free $R$-module with $C$ a finite cyclic group. Then the image of $C$ in $R$ is a subgroup of the unit group of $R$ so that $C$ acts naturally on $M$ by automorphisms. 
Theorem \ref{ThZCGE} allows us to show that the semi-direct product $G = M \rtimes C$ has only one Nielsen class of generating $k$-tuples for every $k \ge \rk(G) + 1$, where $\rk(G)$ denotes the minimal number of generators of $G$ \cite[Corollary 4]{Guy16b}.

Our proof of Theorem \ref{ThZCGE} reuses extensively the techniques developed in \cite{DMV84}. 
Lemma \ref{LemBottomUp} below compiles all criteria from the latter article which render induction possible.
The proof of these criteria is faithful to the original. We provide it for the reader's convenience. 
A notable difference in our case is that we need to extend the induction basis  
to the integer ring $\Zd$ of the cyclotomic field $\Q(\zeta_d)$ ($\zeta_d = e^{\frac{2i\pi}{d}}$) for all positive rational integers $d \notin \{1, 2, 3, 4, 6\}$. 
For this we can rely on \cite{Vas72}. The basis of our induction also comprises a set of $31$ small quotients of $\ZX$ parametrized by the non-empty subsets of $\{1, 2, 3, 4, 6\}$. These quotients can in turn be handled by induction, still using Lemma \ref{LemBottomUp}.\\

\paragraph{\textbf{Acknowledgments}.}
The author thanks Pierre de la Harpe and Tatiana Smirnova-Nagnibeda for encouragements 
and comments made on preliminary versions of this paper.

\section{Proof}
Our first step consists in the following observation: we only need to prove that $\GE_2$ holds for the homomorphic images of $\ZC$.
This is the assertion $(ii)$ of this first lemma which collects elementary facts for later use:
\begin{lemma} \label{LemGECriteria}
Let $R$ be a commutative ring with identity.
Then the following assertions hold:
\begin{itemize}
\item[$(i)$] If the stable rank of $R$ is $1$, then $R$ is a $\GE$-ring.
\item[$(ii)$] If the stable rank of $R$ is at most $2$ (e.g., the Krull dimension of $R$ is at most $1$), then $R$ is a $\GE$-ring if and only if it is a $\GE_2$-ring.
\item[$(iii)$] $R$ is a $\GE_2$-ring if and only if $\E_2(R)$ acts transitively on the set
$$\U_2(R) = \{ (r, s) \in R^2 \, \vert \,  rR + sR = R\}$$ of unimodular pairs of $R$.
\end{itemize}
\end{lemma}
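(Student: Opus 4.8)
The plan is to treat all three assertions as consequences of a single mechanism, namely the interplay between the stable rank of $R$ and the transitivity of $\E_n(R)$ on the set $\U_n(R)$ of unimodular rows of length $n$. The external input I would quote without proof is the stable range theorem in its transitivity form (Bass, Vaserstein): if $\sr(R) \le d$, then $\E_n(R)$ acts transitively on $\U_n(R)$ for every $n \ge d+1$; together with Bass's bound $\sr(R) \le \dimK(R) + 1$, which justifies the parenthetical remark in $(ii)$. The only genuinely self-contained work is then a downward reduction, which I would isolate as the engine of the whole lemma: \emph{if $\E_k(R)$ acts transitively on $\U_k(R)$ for every $2 \le k \le n$, then $\SL_n(R) = \E_n(R)$}. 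Combined with the equivalence $\GE_n \Leftrightarrow \SL_n(R)=\E_n(R)$ already recorded in the introduction (via Whitehead's lemma, using commutativity), this reduction is exactly what converts transitivity statements into $\GE_n$ statements.

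First I would dispatch $(iii)$, since it is the cleanest and is implicitly used in the other two parts. By the introduction, $\GE_2$ is equivalent to $\SL_2(R) = \E_2(R)$, so it suffices to show that the latter is equivalent to $\E_2(R)$ acting transitively on $\U_2(R)$. The key observation is that $\SL_2(R)$ \emph{always} acts transitively on $\U_2(R)$: a unimodular pair $(r,s)$ with $rx+sy=1$ occurs as the first row of $\bigl(\begin{smallmatrix} r & s \\ -y & x \end{smallmatrix}\bigr) \in \SL_2(R)$, so the orbit of $(1,0)$ is all of $\U_2(R)$. Moreover the stabilizer of $(1,0)$ in $\SL_2(R)$ consists of the matrices $\bigl(\begin{smallmatrix} 1 & 0 \\ c & 1 \end{smallmatrix}\bigr)$, which lie in $\E_2(R)$. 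Hence $\E_2(R)$-transitivity forces $\E_2(R)=\SL_2(R)$, and the converse is immediate. This orbit--stabilizer argument is the heart of $(iii)$.

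Next I would prove the reduction lemma and read off $(i)$ and $(ii)$ from it. Given $g \in \SL_n(R)$, its first row is unimodular, so transitivity at level $n$ produces $\varepsilon \in \E_n(R)$ with $g\varepsilon$ having first row $(1,0,\dots,0)$; clearing the first column by elementary operations puts $g$ into block form $\bigl(\begin{smallmatrix} 1 & 0 \\ 0 & h \end{smallmatrix}\bigr)$ with $h \in \SL_{n-1}(R)$, and such a block lies in $\E_n(R)$ exactly when $h \in \E_{n-1}(R)$. Recursing with transitivity at levels $n-1,\dots,2$ and bottoming out at $\SL_2(R)=\E_2(R)$ yields $g \in \E_n(R)$. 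Now $(i)$ follows because $\sr(R)=1$ supplies transitivity at every level $n \ge 2$, so $\SL_n(R)=\E_n(R)$ for all $n$, i.e. $R$ is a $\GE$-ring. For $(ii)$, $\sr(R)\le 2$ supplies transitivity at all levels $n \ge 3$, while the hypothesis $\GE_2$ (equivalently, level-$2$ transitivity by $(iii)$) fills the remaining level; the reduction then gives $\SL_n(R)=\E_n(R)$ for all $n$. The reverse implication in $(ii)$ is trivial from the definition of a $\GE$-ring.

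The hard part will not be any single computation but keeping the downward induction watertight: I must ensure each clearing step is realized by honest elementary matrices, that the determinant condition descends correctly to the $(n-1)\times(n-1)$ block, and above all that transitivity is available at \emph{every} intermediate level $2 \le k \le n$, not merely at the top. This last point is precisely why the stable-rank hypotheses are the natural ones, since they deliver transitivity throughout a whole range of degrees, and it explains why $\GE_2$ alone cannot be bootstrapped to $\GE$ without a stable-rank bound to control the higher levels.
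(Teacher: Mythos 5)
Your proposal is correct and takes what is essentially the paper's (implicit) approach: the paper states this lemma without proof, treating it as a compilation of standard facts, and your combination of the Bass--Vaserstein stable range theorem, the orbit--stabilizer computation at $(1,0)$ for part $(iii)$, and the downward block-reduction $\SL_n \to \SL_{n-1}$ is precisely the standard reasoning those facts rest on. One small caution: your claim that $\bigl(\begin{smallmatrix} 1 & 0 \\ 0 & h \end{smallmatrix}\bigr) \in \E_n(R)$ \emph{exactly when} $h \in \E_{n-1}(R)$ overstates what is known (the ``only if'' direction is a nontrivial injective-stability assertion), but this is harmless since your recursion only uses the ``if'' direction.
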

$\square$

A semilocal ring has stable rank $1$ \cite[Corollary 6.5]{Bas64}. 
As a result semilocal rings, and Artinian rings in particular, are $\GE$-rings.
All rings considered in the proof of Theorem \ref{ThZCGE} are quotients of $\ZC$ and hence have stable rank at most $2$ \cite[Corollary 6.7.4]{McCR87}.

Our second step consists in a reduction to quotients of $\ZC$ of a special kind. To this end, we will need the following stability properties:
\begin{lemma} \label{LemStabilityOfGE}
Let $R$ be a commutative ring with identity.
Then the following assertions hold:
\begin{itemize}
\item[$(i)$]  Let $J$ be an ideal contained in the Jacobson radical of $R$. Then $R$ is a $\GE$-ring if and only if $R/J$ is a $\GE$-ring
\cite[Proposition 5]{Gel77}.
\item[$(ii)$] If $R/I$ is a $\GE$-ring for some finite ideal $I$ of $R$, then $R$ is a $\GE$-ring.
\end{itemize}
\end{lemma}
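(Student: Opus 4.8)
The first assertion is exactly \cite[Proposition 5]{Gel77}, so the plan is to cite it; the mechanism is that $J \subseteq \mathrm{rad}(R)$ forces units and off-diagonal entries to lift along $R \to R/J$, so that $\GL_n(R) \to \GL_n(R/J)$ is onto, while any matrix congruent to the identity modulo $J$ has unit diagonal entries and is therefore reduced to a diagonal matrix by elementary operations. Combined with the hypothesis that $R/J$ is a $\GE$-ring, this yields $\GL_n(R) = \D_n(R)\E_n(R)$ for every $n$.

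The substance is assertion $(ii)$, and the plan is to split $R$ as a product of two $\GE$-rings by isolating an idempotent out of $I$. First I would observe that, $I$ being finite, the descending chain $I \supseteq I^2 \supseteq I^3 \supseteq \cdots$ of subsets stabilizes, say $J := I^k = I^{k+1}$; thus $J$ is a finite, hence finitely generated, ideal satisfying $J^2 = J$. The key classical fact to invoke is that a finitely generated idempotent ideal of a commutative ring is generated by an idempotent (a Nakayama/determinant-trick argument): there is $e = e^2 \in R$ with $J = eR$.

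Next I would exploit the resulting splitting $R = eR \times (1-e)R$. Since $eR = J = I^k \subseteq I$, one checks that $eI = eR$, so the ideal $I$ decomposes as $I = eR \times (1-e)I$, whence $R/I \cong (1-e)R/(1-e)I$. On the factor $(1-e)R$ the image $(1-e)I$ of $I$ satisfies $\big((1-e)I\big)^k = (1-e)I^k = (1-e)J = 0$, so it is nilpotent and therefore contained in the Jacobson radical of $(1-e)R$; as $(1-e)R/(1-e)I \cong R/I$ is a $\GE$-ring by hypothesis, assertion $(i)$ shows that $(1-e)R$ is a $\GE$-ring. On the other factor, $eR = J$ is a finite commutative ring, hence Artinian, hence semilocal of stable rank $1$, so it is a $\GE$-ring by Lemma \ref{LemGECriteria}$(i)$. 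Finally, since $\GL_n$, $\D_n$ and $\E_n$ all distribute over finite direct products of rings, a finite product of $\GE$-rings is a $\GE$-ring, and therefore $R = eR \times (1-e)R$ is a $\GE$-ring.

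The main obstacle I anticipate is the passage from the finite ideal $I$ to the idempotent $e$: one must check that the stabilized power $J$ is genuinely idempotent, correctly invoke the generation of a finitely generated idempotent ideal by an idempotent, and then carefully verify that $I$ splits as $eR \times (1-e)I$ along the decomposition $R = eR \times (1-e)R$. Once this bookkeeping is in place, the two factors are handled painlessly — one by the radical-stability assertion $(i)$, the other by the stable-rank-$1$ criterion — and the product assembles back to $R$.
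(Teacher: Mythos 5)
Your argument is correct, and for assertion $(ii)$ it takes a genuinely different route from the paper's. You argue structurally: since $I$ is finite, the chain $I \supseteq I^2 \supseteq \cdots$ stabilizes at an idempotent, finitely generated ideal $J = I^k$, which by the standard Nakayama/determinant trick is generated by an idempotent $e$; this splits $R \cong eR \times (1-e)R$ with $eR = J$ finite (hence Artinian, hence $\GE$) and $(1-e)I$ nilpotent, so that assertion $(i)$ transfers the $\GE$ property from $(1-e)R/(1-e)I \cong R/I$ up to $(1-e)R$, and the product is $\GE$ because $\GL_n$, $\D_n$ and $\E_n$ all distribute over finite direct products. Your bookkeeping checks out, including the identities $eI = eR$ (from $e \in J \subseteq I$), $I = eR \oplus (1-e)I$, and $\bigl((1-e)I\bigr)^k = (1-e)J = 0$, and the edge case $J = 0$ degenerates gracefully ($e = 0$, $I$ nilpotent, $(i)$ applies directly). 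The paper instead argues matrix-by-matrix and never uses assertion $(i)$: given $A \in \SL_n(R)$, the $\GE$ property of $R/I$ yields $E \in \E_n(R)$ with $B = AE^{-1}$ congruent to the identity modulo $I$; the entries of $B$ then lie in the subring $K = \Z 1_R + I$ generated by $I$ and $1_R$, and $K$ is handled by cases on the prime field --- if it is finite, $K$ is finite hence Artinian; if it is $\Z$, then $\Z 1_R \cap I = 0$, the additive group of $K$ is $\Z \times I$, and with $m$ the exponent of $I$ the finite ring $K/mK$ is $\GE$, allowing a further elementary reduction of $B$ into $\SL_n(\Z 1_R)$, which is finished off by the Euclidean algorithm in $\Z$. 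Your decomposition is arguably more transparent, isolating exactly why finiteness of $I$ matters (a finite idempotent block plus a nilpotent remainder), at the cost of invoking \cite[Proposition 5]{Gel77} via assertion $(i)$ and the idempotent-generation lemma; the paper's proof is longer on coefficient-chasing but self-contained relative to $(i)$, needing only that finite rings are $\GE$ and that $\Z$ is Euclidean.
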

\begin{proof}
$(i)$ See aforementioned reference.
$(ii)$. Let $n \ge 2$ and let $A \in \SL_n(R)$. As $R/I$ is a $\GE$-ring, we can find $E \in \E_n(R)$ such that 
$B = AE^{-1}$ lies in $\SL_n(R, I) \Doteq \ker(\SL_n(R) \twoheadrightarrow \SL_n(R/I))$. The coefficients of $B$ lie in the subring $K$ of $R$ generated by $I$ and $1_R$, the identity of $R$. Therefore it suffices to show that $K$ is a $\GE$-ring. If the prime field of $R$ is finite, then $K$ is finite and hence Artinian. As $K$ is a $\GE$-ring in this case, we can assume now that the prime field of $R$ is $\Z$. We have then $\Z 1_R \cap I = 0$, which implies that the additive group of $K$ is isomorphic to $\Z \times I$. Let $e$ be the exponent of the additive group of $I$. Then $K/eK$ is finite and hence a $\GE$-ring. We can find $E' \in \E_n(K)$ such that $C = BE'^{-1}$ lies in 
$\SL_n(K, eK) \subset \SL_n(\Z 1_R)$. As $\Z 1_R \simeq \Z$ is Euclidean, we have $C \in \E_n(\Z 1_R)$, which completes the proof. 
\end{proof}

We now introduce the special $\GE_2$-quotients of $\ZX$ to which the study of $\ZC$ can be reduced.
Given a rational integer $d > 0$ we let $\lambda_d: \ZX \rightarrow \Zd$ be the ring homomorphism induced 
by the map $X \mapsto \zeta_d$.
Given a set $\mD$ of positive rational integers, we define $\lambda_{\mD} : \ZX \rightarrow \PZd$ by 
$\lambda_{\mD} =  \prod_{d \in \mD} \lambda_d$ and set $\mOD = \lambda_{\mD}(\ZX)$.

\begin{lemma} \label{LemOD}
Let $\mD$ be a finite and non-empty set of positive rational integers.
Then  $\mO(\mD)$ is a $\GE_2$-ring.
\end{lemma}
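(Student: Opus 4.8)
The plan is to verify the transitivity criterion of Lemma~\ref{LemGECriteria}$(iii)$: I will show that $\E_2(\mOD)$ acts transitively on the set $\U_2(\mOD)$ of unimodular pairs, which is equivalent to $\mOD$ being a $\GE_2$-ring. Writing $\Phi_d$ for the $d$-th cyclotomic polynomial, we have $\ker \lambda_d = (\Phi_d)$, and since the $\Phi_d$ are pairwise coprime the map $\lambda_{\mD}$ identifies $\mOD$ with $\ZX/(\prod_{d \in \mD} \Phi_d)$, a subring of $\PZd$. I will argue by induction on $\vert \mD \vert$. When $\vert \mD \vert = 1$, say $\mD = \{d\}$, we have $\mOD = \Zd$, which is Euclidean (hence a $\GE$-ring) for $d \in \{1,2,3,4,6\}$ and a $\GE_2$-ring by Vaserstein \cite{Vas72} for $d \notin \{1,2,3,4,6\}$; in either case the base case holds.

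For the inductive step, fix $e \in \mD$, set $\mD' = \mD \setminus \{e\}$, and let $\pi : \mOD \twoheadrightarrow \mODe$ be the projection forgetting the $e$-th coordinate. Its kernel $I_e$ consists of the elements of $\mOD$ supported on the $e$-th coordinate; evaluating that coordinate identifies $I_e$, as a ring and as a subset, with the nonzero ideal $J = h(\zeta_e)\Ze$ of $\Ze$, where $h = \prod_{d \in \mD'} \Phi_d$. Given $(a,b) \in \U_2(\mOD)$, I first apply the inductive hypothesis to $\pi(a,b) \in \U_2(\mODe)$: there is $\bar E \in \E_2(\mODe)$ reducing it to $(\idb, 0)$. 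Since $\pi$ is surjective, $\bar E$ lifts to some $E \in \E_2(\mOD)$, and replacing $(a,b)$ by $(a,b)E$ I may assume $a - \idb \in I_e$ and $b \in I_e$. On the $e$-th coordinate the pair then reads $(\alpha, \beta)$ with $\alpha - 1 \in J$ and $\beta \in J$, and $(\alpha, \beta)$ is unimodular in $\Ze$ because $(a,b)$ is unimodular in $\mOD$.

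The key observation is that, because $b$ is supported on the $e$-th coordinate, the transvection $(a,b) \mapsto (a + tb, b)$ fixes every coordinate $d \neq e$ for \emph{any} $t \in \mOD$ (it adds $t_d b_d = 0$ there), whereas $(a,b) \mapsto (a, b + ta)$ fixes those coordinates only when $t \in I_e$. Both operations preserve the invariants $a - \idb \in I_e$ and $b \in I_e$, so the whole reduction may be carried out on the $e$-th coordinate alone. Since evaluation at the $e$-th coordinate maps $\mOD$ onto $\Ze$ and $I_e$ onto $J$, the admissible moves on $(\alpha, \beta)$ are the addition of an arbitrary $\Ze$-multiple of $\beta$ to $\alpha$, together with the addition of a $J$-multiple of $\alpha$ to $\beta$. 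Thus the problem reduces to the relative statement: reduce $(\alpha, \beta)$, with $\alpha \equiv 1$ and $\beta \equiv 0 \pmod J$, to $(1,0)$ using these asymmetric, $J$-restricted transvections.

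I expect this relative reduction to be the main obstacle. Note first that $\beta$ stays in $J$ under both admissible moves, so $\beta$ can never become a unit; the reduction must therefore drive $\beta$ to $0$ and then clear the resulting unit $\alpha \equiv 1 \pmod J$, the latter step being elementary. Driving $\beta$ to $0$ is the hard part, as the $J$-restriction blocks a direct use of the absolute transitivity guaranteed by the $\GE_2$-property of $\Ze$. I would handle it by combining that property (Euclidean for $e \in \{1,2,3,4,6\}$, Vaserstein \cite{Vas72} otherwise) with the semilocality of $\Ze/J$, which is Artinian as $J \neq 0$, hence of stable rank $1$ and a $\GE$-ring by Lemma~\ref{LemGECriteria}$(i)$, following the relative-reduction techniques of \cite{DMV84}. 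Once $(\alpha, \beta)$ is brought to $(1,0)$ by admissible moves, their lifts bring $(a,b)$ to $(\idb, 0)$ in $\mOD$, completing the induction. Finally, the residual cases $\mD \subseteq \{1,2,3,4,6\}$, the $31$ nonempty subsets mentioned in the introduction, are precisely those reached after peeling off every $d \notin \{1,2,3,4,6\}$, and they are disposed of by the same induction with the Euclidean base cases.
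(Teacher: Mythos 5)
Your skeleton coincides exactly with the paper's: the same induction on $\vert \mD \vert$ with the same base cases (Euclidean rings for $d \in \{1,2,3,4,6\}$, Vaserstein \cite{Vas72} otherwise), and the same reduction to the $e$-th coordinate under the asymmetric moves --- arbitrary multiples of $\beta$ added to $\alpha$, but only $J$-multiples of $\alpha$ added to $\beta$, where $J = \eta_e \Ze$ with $\eta_e = \prod_{d \in \mD \setminus \{e\}} \Phi_d(\zeta_e)$. This is precisely the setup of Lemma \ref{LemBottomUp} and the matrices (\ref{EqReductionMatrices}). But at the point you yourself flag as ``the main obstacle'' there is a genuine gap, and the tools you invoke cannot close it. Transitivity of the $J$-restricted moves on pairs $(\alpha, \beta) \equiv (1,0) \bmod J$ is \emph{not} a formal consequence of $\Ze$ being a $\GE_2$-ring plus $\Ze/J$ being semilocal: the obstruction is of Mennicke-symbol/congruence-subgroup type and lives inside the restricted group, which the stable rank of the quotient does not see. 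Indeed, every proper quotient of $\Ze$ is semilocal, so if your argument worked it would yield the relative reduction for every $e$ and every nonzero $J$; but over $\Z$ the relevant reduction lemma \cite[Lemma 4.1]{DMV84} holds only for the moduli $2$ and $3$, and the entire case analysis of Lemma \ref{LemBottomUp} (conditions $(ii)$--$(v)$, with $\eta_e$ a unit, $\pm 3$, a generator of $2\Ze$, or of $(1-\zeta_e)^k\Ze$) exists because the uniform statement is unavailable. Concretely, your ``fix $e \in \mD$'' is already fatal: for $\mD = \{1,5\}$ and $e = 1$ you get $J = \Phi_5(1)\Z = 5\Z$ over $\Z$, where none of the known reduction lemmas applies; the paper instead must peel off $e = 5$ --- always the \emph{largest} element outside $\{1,2,3,4,6\}$ --- so that case $(i)$ applies, which rests on Vaserstein's relative $\SL_2$ theory over rings of integers with infinitely many units together with the resultant criterion governing when $\Phi_d(\zeta_e)$ is a unit.

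Beyond the choice of $e$, two further pieces of content are missing. First, once $\mD \subseteq S = \{1,2,3,4,6\}$, one needs the explicit computations of $\eta_e$ showing that one of the conditions $(ii)$--$(v)$ holds for each subset, and two subsets escape even these: $\{1,2,3,6\}$ and $S$ itself require the ad hoc \cite[Propositions 5.1 and 5.10]{DMV84} --- so your closing claim that the $31$ residual subsets ``are disposed of by the same induction with the Euclidean base cases'' is false as stated and glosses over the hardest part of the proof. Second, your ``latter step being elementary'' (clearing the unit $\alpha$ after $\beta = 0$) is also not automatic under restricted moves: in the paper, $\alpha \in (1 + \eta_e\Ze) \cap (\Ze)^{\times}$ forces $\alpha = \pm 1$ only in the special cases at hand, and the case $\alpha = -1$ is repaired by an explicit three-matrix identity that requires $\eta_e \Ze = 2\Ze$.
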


We postpone the proof of Lemma \ref{LemOD} and prove the theorem first.
\begin{sproof}{Proof of Theorem \ref{ThZCGE}}
Set $n = \cC$ and let $R$ be an homomorphic image of $\ZX$. There exists an ideal $I$ of $\ZX$ containing $X^n - 1$
 and such that $R = \ZX /I$.
If $R$ is finite, then $R$ is Artinian and hence a $\GE$-ring. Thus we can assume that $R$ is infinite. 
By Lemma \ref{LemStabilityOfGE}.$i$ we can also assume that $R$ is reduced. 
Since $R$ is Noetherian, the ideal $I$ is the intersection of finitely many prime ideals of $\ZX$. Those ideals contain $X^n - 1 = \prod_{d\, \vert \, n} \Phi_d(X)$, therefore they are either principal ideals generated by cyclotomic polynomials or maximal ideals. In the latter case the index in $R$ is necessarily finite. Consider then the finite and non-empty set $\mD$ of all positive rational integers such that $I \subset \Phi_d(X) \ZX$ and let $\pi: \ZX \twoheadrightarrow R$ be the natural ring epimorphism. 
Using the quotients of $\ZX$ by the prime ideals containing $I$, we easily see that there is a finite direct sum $K$ of finite fields, an injective ring homomorphism 
$\lambda: R \rightarrow \PZd \times K$ and a surjective ring homomorphism $\kappa: R \twoheadrightarrow K$ such that 
$\lambda \circ \pi = \lambda_{\mD} \times \kappa$.
Then the projection $\PZd \times K \twoheadrightarrow \PZd$ induces a ring epimorphism from
$R$ onto $\mO(\mD)$ whose kernel is $R \cap (\{0\} \times K)$, $R$ being identified with $\lambda(R)$. 
Applying Lemma \ref{LemStabilityOfGE}.$ii$ yields the conclusion.
\end{sproof}

The remainder of this section is devoted to the proof of Lemma \ref{LemOD}.

\begin{lemma} \label{LemBottomUp}
Let $\mD$ be a finite set of positive rational integers with at least two elements. Let $e \in \mD$ and
set $\eta_e = \prod_{d \in \mD \setminus \{e \}} \Phi_d(\zeta_e)$.
Assume that $\mODe$ is a $\GE_2$-ring and that at least one of the following holds:
\begin{itemize}
\item[$(i)$] 
$e \notin \{1, 2, 3, 4, 6\}$ and $\frac{d}{e}$ and is not a positive power of a prime for any
$d \in \mD \setminus \{e\}$.
\item[$(ii)$]
$e \in \{1, 2, 3, 4, 6\}$ and $\eta_e \in (\Ze)^{\times}$.
\item[$(iii)$]
$e \in \{1, 2\}$ and $\eta_e = \pm 3$.
\item[$(iv)$]
$e \in \{1, 2, 3, 6\}$ and $\eta_e \Ze = 2 \Ze$.
\item[$(v)$]
$e \in \{3, 4, 6\}$ and $\eta_e\Ze = (1 - \zeta_e)^k \Ze$ for some $k \in \{1, 2\}$.
\end{itemize}
 Then $\mOD$ is a $\GE_2$-ring.
\end{lemma}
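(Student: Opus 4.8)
The plan is to realize $\mOD$ as a pullback (a conductor square) and to run a Mayer--Vietoris style reduction on unimodular pairs, so that by Lemma \ref{LemGECriteria}.$iii$ it suffices to carry every $(r,s) \in \U_2(\mOD)$ to $(\idb,0)$ by elementary moves. First I would set up the square. The projection $\pr_e : \mOD \to \Ze$ onto the $e$-th factor is surjective, while the forgetful projection $\rho : \mOD \to \mODe$ is surjective with kernel $N = \{\, x \in \mOD : \pr_d(x) = 0 \text{ for all } d \neq e \,\}$; since $f(\zeta_d) = 0$ for every $d \neq e$ forces $\prod_{d\neq e}\Phi_d(X) \mid f$, one gets $\pr_e(N) = \eta_e \Ze$. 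Putting $S = \Ze/\eta_e\Ze$, the map $\mODe \to S$ sending $\lambda_{\mD\setminus\{e\}}(f)$ to $f(\zeta_e) \bmod \eta_e$ is well defined and surjective, and I would check that $\mOD = \mODe \times_S \Ze$: a matching pair $(a,b)$ is the image of $f + (\prod_{d\neq e}\Phi_d)\,h$ for suitable $h$. I would also record that $\Ze$ is a $\GE_2$-ring — a Euclidean PID when $e \in \{1,2,3,4,6\}$, and a $\GE_2$-ring by \cite{Vas72} otherwise, since then $\Q(\zeta_e)$ has infinitely many units.

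Next I would perform the reduction. As $\E_2(\mOD)\to\E_2(\mODe)$ is onto and $\mODe$ is a $\GE_2$-ring by hypothesis, I first apply an element of $\E_2(\mOD)$ to arrange $\rho(r,s)=(1,0)$, so that $r - \idb, s \in N$ and the only surviving datum is the $e$-component $(r_e,s_e)\in\U_2(\Ze)$, with $r_e \equiv 1$ and $s_e \equiv 0 \pmod{\eta_e}$. The task becomes to reduce this pair to $(1,0)$ by moves preserving $\rho=(1,0)$. The key bookkeeping point is that a lower move $(r,s)\mapsto(r+ts,s)$ fixes $\rho=(1,0)$ for \emph{every} $t\in\mOD$, and since $\pr_e$ is surjective its $e$-component $t_e$ may be \emph{any} element of $\Ze$; whereas an upper move $(r,s)\mapsto(r,s+tr)$ preserves $\rho=(1,0)$ only for $t\in N$, i.e.\ for $t_e\in\eta_e\Ze$. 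Writing $s_e = \eta_e s'$, these become $r_e \mapsto r_e + c\,\eta_e s'$ and $s'\mapsto s'+c\,r_e$ with $c\in\Ze$ arbitrary, together with rescalings $(r_e,s')\mapsto(w r_e, w^{-1}s')$ by units $w\equiv 1\pmod{\eta_e}$ (apply Whitehead's identity to $\diag(W,W^{-1})$, $W=\idb$ off the $e$-th factor).

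In cases $(i)$ and $(ii)$ the element $\eta_e$ is a unit of $\Ze$: for $(ii)$ by hypothesis, and for $(i)$ because $\Phi_d(\zeta_e)$ is a cyclotomic unit whenever $d/e$ is not a prime power. Then $S=0$, the square degenerates to the product $\mOD=\mODe\times\Ze$, and transitivity is inherited factorwise from the two $\GE_2$-rings. The substance lies in $(iii)$, $(iv)$, $(v)$, where $\eta_e\Ze$ is one of the explicit small ideals $3\Z$, $2\Ze$, or $(1-\zeta_e)^k\Ze$. Here I would run a constrained Euclidean descent on $(r_e,s')$, which satisfies $\gcd(r_e,s')=1$ and $r_e\equiv 1\pmod{\eta_e}$: the move $s'\mapsto s'+c\,r_e$ reduces $s'$ modulo $r_e$ below $N(r_e)$, while $r_e\mapsto r_e+c\,\eta_e s'$ reduces $r_e$ only modulo $\eta_e s'$, the congruence $r_e\equiv 1\pmod{\eta_e}$ being preserved throughout; once $r_e$ is a unit one sets $s'=0$ and normalizes $r_e$ to $1$ by rescaling with $w=r_e^{-1}\equiv 1\pmod{\eta_e}$.

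The main obstacle is precisely the termination of this descent, because the lower move changes $r_e$ in steps of $\eta_e s'$ rather than $s'$: the norm decreases by a factor governed by $N(\eta_e)$ against the Euclidean covering constant of $\Ze$ (which is $\tfrac12$ for $\Z$ and $\Z[i]$, and $\tfrac13$ for $\Z[\zeta_3]$). For $3\Z$ the estimate $|r_e + c\cdot 3s'|\le\tfrac32|s'|\le\tfrac34|r_e|$ gives strict descent at once, and for $2\Ze$ one still gets a contraction factor ($\tfrac12$ or $\tfrac49$); but for $(1-\zeta_e)^2\Ze$ with $e\in\{3,4\}$ the inequality becomes an equality, so the borderline configurations must be handled separately, exploiting the smallness of $S$ and the unit group of $\Ze$. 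This case-by-case verification, faithful to \cite{DMV84}, is where essentially all the work resides; the pullback formalism and the reduction of the $\mODe$-component are comparatively formal.
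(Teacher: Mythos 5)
Your setup agrees exactly with the paper's reduction: the conductor square $\mOD = \mODe \times_S \Ze$ with $S = \Ze/\eta_e\Ze$, the computation $\pr_e(\ker \rho) = \eta_e\Ze$, and the bookkeeping that lower moves carry an arbitrary $e$-component while upper moves are constrained to $\eta_e\Ze$ are precisely conditions (1), (2) and the matrices (3) in the paper. But your treatment of case $(i)$ contains a genuine error: you claim $\eta_e \in (\Ze)^{\times}$ whenever $d/e$ is not a positive power of a prime. The unit criterion for $\Phi_d(\zeta_e)$ is \emph{asymmetric}: by the resultant formula for cyclotomic polynomials, $\Phi_d(\zeta_e)$ is also a non-unit when $e/d$ is a positive power of a prime, a configuration that hypothesis $(i)$ does not exclude. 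Concretely, take $\mD = \{1, 5\}$ and $e = 5$: then $d/e = 1/5$ is not a positive power of a prime, so $(i)$ applies (with $\mODe = \Z$), yet $\eta_5 = \Phi_1(\zeta_5) = \zeta_5 - 1$ has norm $\pm\Phi_5(1) = \pm 5$. This is not a marginal case: $\mO(\{1,5\}) = \Z\br{X}/(X^5 - 1) = \Z\br{C}$ for $C$ cyclic of order $5$, and in the induction of Lemma 3, where $e$ is chosen as the largest member of $\mD$ outside $\{1,2,3,4,6\}$, the typical application of $(i)$ has $d < e$ with $e/d$ a prime power -- exactly the situations where the conductor is nontrivial. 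So the square does not degenerate to a product, factorwise transitivity is unavailable, and your appeal to \cite{Vas72} for the absolute statement that $\Ze$ is a $\GE_2$-ring is insufficient: what case $(i)$ requires is a \emph{relative} transitivity statement -- the group generated by upper elementary matrices with entry in $\eta_e\Ze$ and lower elementary matrices with arbitrary entry acting transitively on unimodular pairs congruent to $(1,0)$ modulo $\eta_e\Ze$ -- over a ring of integers with infinitely many units. That is precisely the content of \cite[Lemma 3.5]{DMV84}, which the paper invokes verbatim for $(i)$.

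For $(ii)$ your product argument is correct and equivalent to the paper's (which instead observes that when $\eta_e$ is a unit the constrained matrices generate all of $\E_2(\Ze)$ and uses that $\Ze$ is Euclidean for $e \in \{1,2,3,4,6\}$). For $(iii)$--$(v)$ your constrained descent is the right mechanism, but you stop exactly where the work begins: as you note yourself, the contraction estimate degenerates to an equality for $\eta_e\Ze = (1-\zeta_e)^2\Ze$, and you defer the borderline configurations to an unspecified case analysis. The paper disposes of this by citing \cite[Lemma 4.1]{DMV84} for $(iii)$ and $(iv)$ and \cite[Lemma 4.2]{DMV84} for $(v)$ (via the notion of a Euclidean pair $(\Ze, 1 - \zeta_e)$), and then completes the endgame with $c \in (1 + \eta_e\Ze) \cap (\Ze)^{\times} \subset \{\pm 1\}$ together with an explicit three-matrix identity for $c = -1$ when $\eta_e\Ze = 2\Ze$; your unit-rescaling endgame via Whitehead is a legitimate substitute for that last step, but it does not close the descent itself. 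As written, then, the proposal establishes neither $(i)$ nor, in full, $(iii)$--$(v)$.
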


\begin{proof}
Assertion $(i)$ is \cite[Lemma 3.5]{DMV84}. For the remaining assertions 
we identify $\mOD$ with its image in $\mODe \times \Ze$ and let 
$\rb = \left( \begin{pmatrix} a \\ c \end{pmatrix},  \begin{pmatrix} b \\ d \end{pmatrix}\right) \in \U_2(R)$.
Since the projection $\pr_e: \mOD \rightarrow \mODe$ is surjective and $\mODe$ is a $\GE_2$-ring, after multiplication 
by some $E \in \E_2(\mOD)$ we can assume that 
\begin{equation} \label{EqAB}
a = 1,\,b = 0
\end{equation} 
As $\ker(\pr_e)$ maps onto
$\eta_e \Ze$ via the projection $\mOD \twoheadrightarrow \Ze$, we have 
\begin{align} \label{EqCD}
c - 1, d \in \eta_e \Ze \\
\notag \mOD \cap (\{0\} \times \Ze) = \{0\} \times \eta_e\Ze.
\end{align}
Conditions (\ref{EqAB}) and (\ref{EqCD}) remain valid after multiplication by any product of matrices in
$$
\begin{pmatrix}
1 &&  \{0\} \times \eta_e\Ze \\
0 && 1
\end{pmatrix}
\bigcup 
\begin{pmatrix}
1 &&  0 \\
\mOD && 1
\end{pmatrix}.
$$
The projection of these of matrices to the $\Ze$ coordinate is
\begin{equation} \label{EqReductionMatrices}
\begin{pmatrix}
1 &&  \eta_e\Ze \\
0 && 1
\end{pmatrix}
\bigcup 
\begin{pmatrix}
1 &&  0 \\
\Ze && 1
\end{pmatrix}
\end{equation}

If $(ii)$ holds then matrices of the form (\ref{EqReductionMatrices}) generate $\E_2(\Ze)$. As $\Ze$ is a Euclidean ring for 
$e \in \{1, 2, 3, 4, 6\}$ such matrices can therefore be used to reduce $(c, d)$ to $(1, 0)$, reducing thus $\rb$ to $(1, 0)$.

If either $(iii)$ or $(iv)$ holds, then \cite[Lemma 4.1]{DMV84} applies, 
so that multiplication by matrices in (\ref{EqReductionMatrices})
can reduce either $c$ or $d$ to $0$. Since $c - 1 \in \eta_e \Ze$, it must be $d$. 
Assuming now that $d = 0$, the unimodularity of $(c, d)$ yields
$$
c \in (1 + \eta_e\Ze) \cap (\Ze)^{\times} \subset \{\pm 1\}.
$$
If $c = -1$, then $\eta_e \Ze = 2 \Ze$ must hold and the identity
$$
\begin{pmatrix}
c && d
\end{pmatrix}
\begin{pmatrix}
1 && 2 \\
0 && 1
\end{pmatrix}
\begin{pmatrix}
1 &&  0 \\
-1 && 1
\end{pmatrix}
\begin{pmatrix}
1 && 2 \\
0 && 1
\end{pmatrix}
=
\begin{pmatrix}
1 && 0
\end{pmatrix}
$$
shows that $(c, d)$ can be reduced to $(1, 0)$ using matrices in (\ref{EqReductionMatrices}).

If $(v)$ holds then $(\Ze, 1 - \zeta_e)$ is a Euclidean pair with respect to the complex modulus, so that \cite[Lemma 4.2]{DMV84} applies and leads to the desired conclusion.
\end{proof}

\begin{sproof}{Proof of Lemma \ref{LemOD}}
We apply inductively Lemma \ref{LemBottomUp}.$i$ by choosing at each step the largest member $e$ of $\mD$ which is not in 
$S = \{1, 2, 3, 4, 6\}$. In this way $\mD$ is reduced to either a subset of $S$ or a singleton $\{d\}$ with $d \notin S$. In the latter case, $\mOD = \mO(\{d\}) = \Zd$ is a $\GE_2$-ring by \cite{Vas72}.
We also observe that $\mO(\{d\})$ is a $\GE_2$-ring for all $d$ in $S$. Indeed, the rings $\Z$, $\Z \br{i}$ and $\Z\br{\zeta_3}$ are Euclidean rings with respect to the complex modulus. Applying assertions $(ii)$ to $(iv)$ of Lemma \ref{LemBottomUp}, we can prove the result inductively for all pairs, triples and quadruples of elements of $S$, except the quadruple $\{1, 2, 3, 6\}$ for which we apply \cite[Proposition 5.1]{DMV84}. 
Eventually, we apply Lemma \ref{LemBottomUp}.$v$ (or equivalently \cite[Proposition 5.10]{DMV84}) to prove the result for 
$\mD =S$.
\end{sproof}

\bibliographystyle{alpha}
\bibliography{Biblio}

\end{document}